\def\tank#1{\protected@xdef\@thanks{\@thanks
 \protect\footnotetext[0]{#1}}}
\def\bigfoot{

 \@footnotetext}
\newcommand{\ea}{\end{array}}
\numberwithin{equation}{section}
\newtheorem{theorem}{Theorem}[section]
\newtheorem{lemma}{Lemma}[section]
\newtheorem{proposition}{Proposition}[section]
\newtheorem{corollary}{Corollary}[section]
\newtheorem{cor}{Corollary}[section]
\newtheorem{example}{Example}[section]
\def\beq{\begin{equation}}
\def\nneq{\end{equation}}
\def\bthm{\begin{theorem}}
\def\nthm{\end{theorem}}
\def\blem{\begin{lemma}}
\def\nlem{\end{lemma}}
\def\bprf{\begin{proof}}
\def\nprf{\end{proof}}
\def\bprop{\begin{prop}}
\def\nprop{\end{prop}}
\def\brmk{\begin{rem}}
\def\nrmk{\end{rem}}
\def\bexa{\begin{exa}}
\def\nexa{\end{exa}}
\def\bcor{\begin{cor}}
\def\ncor{\end{cor}}
\title[Strassen's local LIL for GFBM]{Strassen's  local law of the iterated logarithm for the generalized fractional Brownian motion}
\author[R. Wang]{Ran Wang \textsuperscript{*}}
 \thanks{* Corresponding author.   School of Mathematics and Statistics,  Wuhan University,  Wuhan, 430072,
 China.  E-mail: rwang@whu.edu.cn}
\author[Y. Xiao]{Yimin Xiao \textsuperscript{$\#$}} 
\thanks{$\#$ Department of Statistics and Probability, Michigan State University, East Lansing,
 MI 48824, USA. E-mail: xiaoy@msu.edu}
 \date{}
\begin{document}
\maketitle
 \vskip-0.3cm
 \noindent {\bf Abstract:}
 Let $X:=\{X(t)\}_{t\ge0}$ be a generalized fractional Brownian motion given by
$$
 \{X(t)\}_{t\ge0}\overset{d}{=}\left\{  \int_{\mathbb R}  \left((t-u)_+^{\alpha}-(-u)_+^{\alpha} \right) |u|^{-\gamma/2} B(du)  \right\}_{t\ge0},
$$
with parameters $\gamma \in (0, 1)$ and $\alpha\in \left(-1/2+ 
\gamma/2, \,  1/2+\gamma/2\right)$. This 
process was introduced by Pang and Taqqu (2019) as the scaling
limit of a class of power-law shot noise processes. The
parameters $\alpha$ and $\gamma$ govern  the probabilistic and 
statistical properties of  $X$. In particular, the parameter 
$\gamma$  breaks the stationarity of increments of $X$.

In this paper, we establish Strassen's local law of the 
iterated logarithm for $X$ at a given point $t_0 \in (0, 
\infty)$. This result describes explicitly the roles played by
the parameters $\alpha, \gamma$, and the location $t_0$.  Our 
theorem  differs from the  earlier Strassen's  
{global law of the iterated logarithm} for $X$  proved by 
Ichiba, Pang and Taqqu (2022).
 \vskip0.3cm

 \noindent{\bf Keywords and Phrases:} {Gaussian self-similar process; Strassen's   {law of the iterated logarithm}; generalized fractional Brownian motion;  
 Lamperti's transformation.}

 \vskip0.5cm

\noindent {\bf MSC: } {60G15, 60G17, 60G18, 60G22.}


 \section{Introduction}


The generalized fractional Brownian motion (GFBM, in  short) $X:=\{X(t)\}_{t\ge0}$ is a centered Gaussian  self-similar
process introduced by Pang and Taqqu \cite{PT2019} as the scaling limit of a class of shot noise processes with power-law
non-stationary conditional variance functions. It has the following stochastic integral representation:
 \begin{align}\label{eq X}
 \left\{X(t)\right\}_{t\ge0}\overset{d}{=}&\left\{  \int_{\mathbb R}  \left((t-u)_+^{\alpha}-(-u)_+^{\alpha} \right) |u|^{-\gamma/2}
  B(du)  \right\}_{t\ge0},
 \end{align}
 where the parameters $\gamma$ and $\alpha$ satisfy
 \begin{align}\label{eq constant}
 \gamma\in  {(}0,1),  \ \  \alpha\in \left(-\frac12+\frac{\gamma}{2}, \  \frac12+\frac{\gamma}{2} \right),
 \end{align}
and   $B(du)$ is a Gaussian random measure on $\mathbb R$ with   Lebesgue  control measure $du$.
It follows that the Gaussian process $X$ is self-similar with index   given by
 \begin{align}\label{Eq:H}
 H=\alpha-\frac{\gamma}{2}+\frac12\in(0,1).
 \end{align}
When $\gamma=0$, $X$ becomes an ordinary fractional Brownian motion  (FBM, for short) $B^H$ which
can be represented as:
\begin{align*}
\left\{B^H(t)\right\}_{t\ge 0}\overset{d}{=}\left\{ \int_{\mathbb R}
\left((t-u)_+^{H-\frac12}-(-u)_+^{H-\frac12} \right) B(du)  \right\}_{t\ge0}.
\end{align*}
  When $\gamma \ne 0$, however, $X$ does not have the property of stationary increments.

As shown by Pang and Taqqu \cite{PT2019},  GFBM $X$ is a natural generalization of the ordinary FBM.
It preserves the self-similarity property while the factor $|u|^{-\gamma/2}$ introduces non-stationarity of increments.
Thus the parameter $\gamma$ not only makes the properties of the GFBM different from those of FBM but,
more importantly, also provides more flexibility in capturing the non-stationarity in stochastic systems.
 For  semimartingale properties of  the  GFBM and its mixtures in terms of $(\alpha,\gamma)$, as well as examples of applications in finance,  see Ichiba, Pang and Taqqu \cite{IPT2020b}.

The parameters $\alpha$ and $\gamma$ govern the probabilistic 
and statistical properties of the GFBM $X$, making it essential to understand their quantitative roles.  Ichiba, Pang  and Taqqu \cite{IPT2021} raised and investigated the question:  ``How does the  parameter $\gamma$ affect the sample path properties of GFBM?"    
 {They proved that the regularity of the GFBM $X$ depends critically on $\alpha$ and $H$. Specifically, for any $\alpha \in (-1/2 + \gamma/2, 1/2]$, the sample paths are H\"older continuous of order $H - \varepsilon$ on $[0, T]$ for any $T>0$ and $\varepsilon>0$; whereas for any $\alpha \in (1/2, 1/2 + \gamma/2)$, the process becomes continuously differentiable (see \cite[Theorems 3.1 and  4.1]{IPT2021}).} Moreover, both the local law of the iterated logarithm (LIL, for short) at the origin and the functional LIL (Strassen's global LIL) at infinity are governed solely by the self-similarity index $H$ (see \cite[Theorems 5.1, 6.1]{IPT2021} and \cite[Proposition 7.1]{WX2021a}). 

In \cite{WX2021a, WX2021b}, we have  
studied some precise sample path properties of the GFBM $X$, 
including the exact uniform modulus of continuity, small ball 
probabilities, Chung's law of the iterated logarithm, and
the tangent processes.  These results demonstrate that the 
local regularity properties of the GFBM $X$ away from the 
origin are determined by the index $\alpha + 1/2$, instead of 
the self-similarity index $H$. Hence, the parameter $\gamma$ 
does not affect the rates of uniform and local oscillations of $X$, and it only affects the constant in the limit theorems.
We further described explicitly the location dependence of the local oscillation properties of $X$ at a given point $t_0>0$. 
For example,  Theorems 1.4-1.6 of \cite{WX2021a} demonstrate  that the oscillations decrease
at the rate $t_0^{-\gamma/2}$ as $t_0$ increases in the 
contexts of Chung's LIL, the local modulus of continuity
and the tangent process,  respectively.  These properties are 
significantly different from those of fractional Brownian motion.

In the present paper, we continue the line of research in 
\cite{IPT2021, IPT2020b, WX2021a, WX2021b} and study the 
local oscillation property of the GFBM $X$ in the context of
Strassen's local LIL. 

Following Strassen's functional LIL for Brownian motion
\cite{Strassen},  such functional limit theorems have been established for more general Gaussian processes, in both limsup and liminf forms. 
Extensive contributions include, among others, the works  \cite{Arc95, 
Gantert, HL2005,IPT2021, KLT, LWH2006, MR1995, Oodaira72, 
Oodaira73, Revesz, Wang2005, WSX2020}. For further historical
background and references on functional LILs for some Gaussian 
and non‑Gaussian processes (such as the Hermite processes), we 
refer to Taqqu and Czado \cite{TC1985} and Li and Shao
\cite{LS2001}. However, except  \cite{Gantert, WSX2020}, all the aforementioned references were concerned with functional
LILs at infinity,  that is, the functional limits of an 
appropriately normalized sequence of the processes 
$\{X(nt)\}_{0\le t \le 1}$, as $n \to \infty$.
  
In this paper,  we study the limsup-type functional LIL for 
the GFBM $X$ at a given point $t_0>0$,  {when
$\gamma\in (0,1)$ and 
$\alpha\in (-1/2+\gamma/2, 1/2)$}. Specifically, we study the functional limit
behavior, as   $h \to 0$,  of the  normalized family
 \begin{align}\label{eq U}
  \mathcal U_{t_0, h}(x):=\frac{X(t_0+hx)-X(t_0)}{t_0^{-\gamma/2} |h|^{\alpha+1/2}\left(  {\log\log}(1/|h|)\right)^{1/2}}
  \ \ \ \ \text{for  } x\in [0,1].
   \end{align}
This yields a local version of Strassen's LIL for the GFBM $X$.

We now make a few remarks concerning \eqref{eq U} and Strassen's LIL established in \cite{IPT2021,WSX2020}. 
\begin{itemize}
\item Ichiba, Pang and Taqqu \cite{IPT2021} established a Strassen's LIL for the sequence of processes
$$
\frac{X(nx)}{ n^{H}\sqrt{ 2  {\log\log}\, n}}
  \ \ \ \ \text{for  }  x\in [0,1],
$$
as $n \to \infty$, where $H$ is defined in \eqref{Eq:H}. 
Hence,  the normalizing factor in \eqref{eq U} differs from 
that  in \cite{IPT2021}.

\item  Strassen's local LIL at the origin can be derived from the functional LIL of Ichiba, Pang and Taqqu \cite[Theorem 5.1]{IPT2021}, combined with
 the time-inversion property of the GFBM $X$  given in Proposition 7.1 of \cite{WX2021a}
and Gantert's time-inversion argument  \cite{Gantert}.
More precisely,  one first extends  Strassen's LIL for $X$ at infinity in \cite[Theorem 5.1]{IPT2021}  to the   
whole time axis  (see \cite[Theorem 1.4.1]{DS1989}); then,    applying Proposition 7.1 of \cite{WX2021a} and Gantert's time-inversion 
  argument yields the corresponding local LIL at the origin.

\item Determining  the functional limit of (\ref{eq U}) is related to the work of  Wang, Su and
Xiao \cite{WSX2020}, where sufficient conditions for Strassen's global and local LILs are established for a class of Gaussian random fields with stationary increments.  A key objective of \cite{WSX2020} is to clarify  how the long-range dependence index and the fractal index affect Strassen’s global and local LILs, respectively.

In \cite{WSX2020}, the stationarity of increments plays a crucial  role: it provides a stochastic spectral representation for the Gaussian random field that is used to create independence 
in the proofs, and it  guarantees that Strassen's local  LILs  are independent of location. Because the GFBM $X$ does not have stationary increments, the arguments in \cite{WSX2020} 
can not be carried over directly. In order to introduce independence,  we follow  \cite{TX2007} and \cite{WX2021a} by employing the integral representation of the Lamperti transformation 
$U = \{U(t)\}_{t \in \mathbb R}$ of the generalized Riemann-Liouville FBM  $Z$ (see (\ref{eq decom}) below). Certain estimates for the spectral density of $U$ obtained in \cite{WX2021a} 
also play an important role in the present paper.
\end{itemize}

Our main result,  Theorem \ref{thm local LIL} below,  gives an explicit description of how  Strassen's local LIL depends on the parameter pair $(\alpha, \gamma)$ and the location 
$t_0\in (0, \infty)$. This theorem strengthens the results obtained in \cite{WX2021a} and, together with Strassen's LIL for $X$ at infinity established in \cite{IPT2021}, completes the functional LIL theory for the GFBM $X$.

We also remark that Strassen's LILs have   been established for
empirical processes   \cite{AT89, F71, James75, Wellner},  
quantile processes \cite{D92},    increments of empirical and 
quantile processes   \cite{DM92}, empirical cumulative 
quantile regression \cite{RZ96},   {and} U-statistics  \cite{AG95}.  {For further 
information, we refer to \cite{LS2001}.}  In all these references, the underlying samples are assumed to be i.i.d. or 
at least stationary, and the corresponding Strassen's laws of 
the iterated logarithm are formulated in terms of the 
reproducing kernel Hilbert space of the scaling limit of the
random samples. Our results may therefore be useful for characterizing the limiting behavior of increments of empirical and quantile processes of non‑stationary time series whose scaling limit is a GFBM. This direction merits further investigation.

The remainder of the paper is organized as follows. In Section 2, we state the main theorems and their applications. We prove the main theorems in Section 3 by using the tools of the large deviation principle, a Fernique-type inequality, and   Lamperti's transformation.

\section{Statement of main theorem}
      
  Let $C([0,1])$  denote the space of  all continuous $\mathbb R$-valued functions on $[0,1]$ equipped with the
     sup-norm $$\|f\|_{\infty}:=\sup_{x\in[0,1]}|f(x)|.$$
             For any $t_0>0$ and $x_1, x_2\ge0$,  define
\begin{align}\label{eq rho}
\rho_{t_0}(x_1, x_2):=\lim_{h\rightarrow 0}\frac{\mathbb E\big[\left(X(t_0+hx_1)-X(t_0)\right)
\left(X(t_0+hx_2)-X(t_0)\right)\big]}{|h|^{ {2\alpha+1}}}.
\end{align}
When $\alpha\in (-1/2+\gamma/2, 1/2)$, a straightforward computation (cf.  \cite[(3.33)]{WX2021a}) yields
   \begin{align}\label{eq limit rho}
   \rho_{t_0}(x_1, x_2)=\frac{c_{2,1}}{2t_0^{\gamma}}\left(x_1^{2\alpha+1} +x_2^{2\alpha+1}-|x_1-x_2|^{2\alpha+1} \right),
   \end{align}
   where  
   \begin{align}\label{eq c21} 
  {  c_{2,1}=\frac{1}{2\alpha+1}+\int_0^{\infty}\left[(1+u)^{\alpha}-u^{\alpha}\right]^2du. }
   \end{align}
    Note that $\rho_{t_0}$ is the covariance function of $c_{2,1}^{1/2} t_{0}^{-\gamma/2} B^{\alpha+1/2}$, where
   $B^{\alpha+1/2}$ denotes  a FBM with index $\alpha+1/2$ on some probability space $(\Omega, \mathcal F, \mathbb P)$.

   Let $\mathcal L$ be the closed linear subspace of $L^2(\Omega)$  generated by  {$\left\{B^{\alpha+1/2}(t)\right\}_{t \ge 0}$}. 
   Define the map  $\psi:\mathcal L\rightarrow C([0,1])$ by
      \begin{align*}
      \psi(\xi)(t):=\mathbb E\left[B^{\alpha+1/2}(t)\xi \right].
      \end{align*}
      We consider the  Cameron-Martin space (also called the reproducing kernel Hilbert space)   induced by    $\rho_{t_0} $. It is the Hilbert space  
\begin{align}\label{eq RKHS}
   \mathcal H:=\{\psi(\xi);\ \xi\in \mathcal L\},
    \end{align}
   equipped  with the inner product 
 \begin{align}\label{eq RKHS inn}
 \langle \psi(\xi_1), \psi(\xi_2)\rangle_{\mathcal H} =\mathbb E\left[\xi_1\xi_2 \right].  
 \end{align}
   For more details, see \cite{Arc, Aro1950, DU99} and \cite[Chapter 6.1]{BHOZ}.  
   
 Define the rate function
      \begin{equation}\label{eq rate}
   I(z):= \left\{\begin{array}{ll}
 \inf\left\{\frac12 \mathbb E[\xi^2]; \, \xi\in \mathcal L, \psi(\xi)=z\right\}, \ \quad &\hbox{ if }\  z\in \mathcal H,\\
 +\infty,  \ \quad &\hbox{ otherwise,}
\end{array}
\right.
 \end{equation}
and  let  
 \begin{align}\label{eq unit ball}
 \mathcal S:=\big\{z\in C([0,1]);\ I(z)\le 1\big\}.
 \end{align}

  Let $X$ be the GFBM defined in  \eqref{eq X};  for notational convenience,  we extend it by setting $X(t):=0$ if $t<0$.
  For any $t_0>0$ and $h\neq 0$,  recall that $ \mathcal U_{t_0, h}= \{\mathcal U_{t_0, h}(x)\}_{x\in [0,1]}$ denotes  the Gaussian process given  by \eqref{eq U}.
    
The following theorem is the main result of the paper.
   \begin{theorem}\label{thm local LIL}   Assume $\gamma\in (0,1)$ and $\alpha\in(-1/2+\gamma/2, 1/2)$. Then   for every  $t_0>0$,
   with probability one,  the family $\left\{\mathcal U_{t_0, h}(x)\right\}_{x\in [0,1]}$  
   is relatively compact on $C([0,1])$ as $h\rightarrow 0$, and its set of   limit points is $\mathcal S$. Namely, for every $t_0\in
   (0,\infty)$,  
    \begin{align}\label{eq equiv1}
 \lim_{h\rightarrow 0}\inf_{f\in \mathcal S} \|\mathcal U_{t_0, h}-f \|_{\infty}=0 \ \ \ \ \text{a.s.},
 \end{align}
 and for each $f\in \mathcal S$,
    \begin{align}\label{eq equiv2}
 \liminf_{h\rightarrow 0} \|\mathcal U_{t_0, h}-f \|_{\infty}=0 \ \ \ \ \text{a.s.}
 \end{align}  
  In particular,  for every continuous functional $F: C([0,1])\rightarrow \mathbb R$,
 \begin{equation}\label{eq continuity}
 \mathbb P\left(\limsup_{h\rightarrow0} F(\mathcal U_{t_0, h})=\sup_{\xi\in \mathcal S}F(\xi) \right)=1.
 \end{equation}
 \end{theorem}

 Let us present some applications of Theorem \ref{thm local LIL},  which are inspired by   \cite{AM05, Gantert}. 
  {The derivation of the explicit constants in \eqref{eq e1}–\eqref{eq e4} relies on Lemma \ref{lem const}  below. }
 \begin{example}\label{exa LIL}
 \begin{itemize}
\item[(a)] \label{example usual LIL}
    Choosing  $F_1(\xi)=\xi(1)$,   \eqref{eq continuity} implies the usual LIL at a given point $t_0 > 0$:  
   \begin{align}\label{eq e1}
   \limsup_{h\rightarrow0  }  \frac{|X(t_0+h)-X(t_0)|}{ t_0^{-\gamma/2} h^{\alpha+1/2}   \left(  {2\log\log}(1/ |h|)\right)^{1/2}}
   = {\sqrt{ c_{2,1} } }\ \ \text{a.s.}
       \end{align}
       Here and below, $c_{2,1}$ is the constant from \eqref{eq c21}.
       
We remark that  the   LIL for the GFBM $X$ at the origin has already been established in \cite[Theorem 6.1]{IPT2021}
  and \cite[Proposition 7.1]{WX2021a}.

 \item[(b)]
    Choosing  $F_2(\xi)=\sup_{0\le t\le 1}|\xi(t)|$,    \eqref{eq continuity} implies the  local modulus of continuity: 
   \begin{align} \label{eq e2}
   \limsup_{h\rightarrow0+  }\sup_{|r|\le h} \frac{|X(t_0+r)-X(t_0)|}{ t_0^{-\gamma/2} h^{\alpha+1/2}     \left(  {2\log\log}(1/ h)\right)^{1/2}}
   = {\sqrt{c_{2,1}  }} \ \ \text{a.s.} 
     \end{align}
      {This result was previously  established in   \cite[Theorem 1.5(a)]{WX2021a} but without  the explicit constant; here we determine the constant.  } 

\item[(c)]  Let  $0<\delta<1$. Then,   choosing  $F_3(\xi)=\sup_{0\le t\le 1-\delta}|\xi(t+\delta)-\xi(t)|$,     \eqref{eq continuity} implies that 
        \begin{align}\label{eq e3}
     \limsup_{h\rightarrow0+}\sup_{0\le t\le h(1-\delta)}\frac{|X(t_0+t+h\delta )-X(t_0+t)|}{ t_0^{-\gamma/2}h^{\alpha+1/2}   \left( {2\log\log}(1/ h)\right)^{1/2}}
       {= c_{2,2}\sqrt{c_{2,1}}  } \ \ \ \text{a.s {.}}
        \end{align}
        {where $c_{2,2} \in \left[\delta^H,  \sqrt{2} \delta^H\right]$}. 
        {When $H=\frac12$,  we have $c_{2,2}=\delta^{\frac12}$.}
          \item[(d)]  Let  $0<\delta<1$. Then,  choosing  $F_4(\xi)=\sup_{0\le t\le 1-\delta}\sup_{0\le s\le \delta}|\xi(t+s)-\xi(t)|$,  \eqref{eq continuity} implies  that
        \begin{align}\label{eq e4}
        \limsup_{h\rightarrow0+}\sup_{0\le t\le h(1-\delta)}\sup_{0\le s\le h\delta}\frac{|X(t_0+t+s )-X(t_0+t)|}{t_0^{-\gamma/2} h^{\alpha+1/2}  
        \left(  {2\log\log}(1/ h)\right)^{1/2}}   {=c_{2,2}\sqrt{c_{2,1}}  } \ \ \ \text{a.s {.},}
        \end{align}
 {where $c_{2,2}$ is  the constant appearing in \eqref{eq e3}.} 
   \end{itemize}   
                  
  The last two cases  (c) and (d) in  Example \ref{exa LIL} are the ``small time statements", which correspond to Corollary 1.2.2
  in Cs\"{o}rg\H{o}  and R\'ev\'esz \cite{CR} for Brownian motion. 
\end{example}

  {
We now turn to the case $\gamma\in (0,1)$ and $\alpha\in (1/2, 1/2+\gamma/2)$. In this regime,    
Theorem 4.1 of \cite{IPT2021} (or  Theorem 1.1 of \cite{WX2021a}) shows that the sample path of $X$ is continuously differentiable,  
with derivative given by}
\begin{equation*}
\begin{split}
  {X'(t)=\alpha \int_{-\infty}^t (t-u)^{\alpha-1}|u|^{-\gamma/2}B(du). }
\end{split}
\end{equation*} 
         { 
  For any $t_0>0$ and $h\neq 0$,  define the Gaussian process 
   $\widetilde{\mathcal U}_{t_0, h}= \left\{\widetilde{\mathcal U}_{t_0, h}(x)\right\}_{x\in [0,1]}$ as} 
       \begin{align*}\label{eq U'}  {
 \widetilde{\mathcal U}_{t_0, h}(x):=\frac{X'(t_0+hx)-X'(t_0)}{\alpha t_0^{-\gamma/2} |h|^{\alpha-1/2}\left(\log\log(1/|h|)\right)^{1/2}}
  \ \ \ \ \text{for  } x\in [0,1].}
   \end{align*}
           
   \begin{theorem}\label{thm local LIL'}     {Assume $\gamma\in (0,1)$ and $\alpha\in(1/2, 1/2+\gamma/2)$. Then,   for every  $t_0>0$,
   with probability one,  the family $\left\{\widetilde{\mathcal U}_{t_0, h}(x)\right\}_{x\in [0,1]}$  
   is relatively compact on $C([0,1])$  as $h\rightarrow 0$, and its  set of   limit points is $\widetilde{\mathcal S}$.   Here $\widetilde{\mathcal S}$ 
   is defined as in \eqref{eq unit ball}, but with the index $\alpha+1/2$ replaced by $\alpha-1/2$.}
 \end{theorem}
   { 
The case $\alpha = 1/2$ is critical for the sample paths of the GFBM $X$. It can be proven that, when $\alpha = 1/2$, the sample path of $X$ 
is not differentiable, and the law of the iterated logarithm at any $t>0$ is established in \cite[Theorem 1.5]{WX2021a}. However, the problems on  
the exact uniform modulus of continuity and Chung's law of the iterated logarithm have remained open; see \cite[Theorems 1.1]{WX2021a} 
for an upper bound for the uniform modulus of continuity. We remark that the critical case of $\alpha = 1/2$ is excluded from the scope of the 
present work because Proposition \ref{prop LDP}, Lemmas  \ref{Lem: Fernique} and \ref{lem spectral} 
fail. It is an open problem to establish Strassen's local LIL for $X$ in this case.
}

\section{Proofs of the main results}
 
\subsection{Reduction to  the generalized Riemann-Liouville FBM}
 {We start with a decomposition of the GFBM $X$:}
 \begin{equation}\label{eq decom}
 \begin{split}
  X(t) &=\int_{-\infty}^0  \big((t-u)^{\alpha}-(-u)^{\alpha} \big) (-u)^{-\gamma/2} B(du)  +\int_0^t  (t-u)^{\alpha}
  u^{-\gamma/2} B(du)\\
  &=: Y(t) + Z(t).
\end{split}
\end{equation}
The Gaussian processes   $Y=\big\{Y(t)\big\}_{t\ge0}$ and  $Z=\big\{Z(t)\big\}_{ t\ge0}$ are independent. The
process $Z$, which is well-defined when $\alpha>-\frac12$ and $\gamma<1$,  is called  a {\it generalized Riemann-Liouville FBM} by Ichiba, Pang and Taqqu \cite{IPT2021}.

According to  Proposition 2.1 of \cite{WX2021a},  the process $Y$  admits a modification that is infinitely continuously differentiable on $(0,\infty)$. Hence, to  prove Theorems \ref{thm local LIL} and \ref{thm local LIL'}, it suffices to establish the corresponding statements for 
  the generalized Riemann–Liouville FBM $Z$ in place  of the  GFBM $X$   appearing in \eqref{eq U}.  These results are  presented  in Theorem \ref{thm local LILZ} and Corollary \ref{coro local LILZ'}  below. 

 {The sample-path properties of $Z$, such as   moment estimates, the exact uniform modulus of continuity, small ball probabilities, and Chung’s laws of the iterated logarithm, are established in \cite[Sections 3–6]{WX2021a}. }

 We extend $Z$ by setting  $Z(t):=0$ if $t< 0$.    For any $t_0>0$ and $h\neq 0$,   define
          \begin{align}\label{eq UZ}
   U_{t_0, h}(x):=\frac{Z(t_0+hx)-Z(t_0)}{t_0^{-\gamma/2}|h|^{\alpha+1/2}\left(  {\log\log}(1/|h|)\right)^{1/2}} \ \ \ \text{for }    x\in [0,1].
   \end{align}

   \begin{theorem}\label{thm local LILZ}      {Assume $\gamma\in (0,1)$ and $\alpha\in(-1/2, 1/2)$}. Then   for every  $t_0>0$,
   with probability one, the family $\left\{U_{t_0, h}
   (x)\right\}_{x\in [0,1]} $ defined by \eqref{eq UZ}  is 
   relatively compact on $C([0,1])$ as  $h\rightarrow 0 $, and
   its set of limit points is $\mathcal S$ defined by 
   \eqref{eq unit ball}. Namely, for every $t_0>0$,  
    \begin{align}\label{eq equiv1b}
 \lim_{h\rightarrow 0 }\inf_{f\in \mathcal S} \|U_{t_0, h}-f \|_{\infty}=0 \ \ \ \ \text{a.s.},
 \end{align}
 and for each $f\in \mathcal S$,
    \begin{align}\label{eq equiv2b}
 \liminf_{h\rightarrow 0 } \|U_{t_0, h}-f \|_{\infty}=0 \ \ \ \ \text{a.s.}
 \end{align}
  In particular,  for every continuous functional $F: C([0,1])\rightarrow \mathbb R$,
 \begin{equation}\label{eq continuity2}
 \mathbb P\bigg(\limsup_{h\rightarrow0} F(U_{t_0, h})=\sup_{\xi\in \mathcal S}F(\xi) \bigg)=1.
 \end{equation}
   \end{theorem}

  When $\alpha\in (1/2, 3/2)$, the derivative of $Z$ is  
      \begin{equation*}
\begin{split}
  {Z'(t)=\alpha \int_{0}^t (t-u)^{\alpha-1}|u|^{-\gamma/2}B(du). }
\end{split}
\end{equation*} 
In this case,  the derivative process $\{Z'(t)/\alpha\}_{t\ge0}$ is a generalized Riemann–Liouville FBM with indices $\gamma\in (0,1)$ and 
$\alpha-1\in (-1/2,   1/2)$.  See \cite[Lemma 3.3]{WX2021a}.
  
  Let
 \begin{align}\label{eq UZ'}
   \widetilde U_{t_0, h}(x):=\frac{Z'(t_0+hx)-Z'(t_0)}{\alpha t_0^{-\gamma/2}|h|^{\alpha-1/2}\left(  {\log\log}(1/|h|)\right)^{1/2}} \ \ \ \text{for }    
 t_0>0, h\neq 0,  x\in [0,1].
   \end{align}
   
According to Theorem \ref{thm local LILZ}, we have   the following  corollary. 
       \begin{corollary}\label{coro local LILZ'}      {Assume $\gamma\in (0,1)$ and $\alpha\in(1/2, 3/2)$}. Then   for every  $t_0>0$,
   with probability one, the family $\left\{\widetilde U_{t_0, h}(x)\right\}_{x\in [0,1]}$ defined by \eqref{eq UZ'}
     is relatively compact on $C([0,1])$ as  $h\rightarrow 0 $, and its set of   limit points is $\widetilde{\mathcal S}$, which is   given by \eqref{eq unit ball} with  $\alpha+1/2$ replaced by $\alpha-1/2$.
   \end{corollary}

 We prove Theorem \ref{thm local LILZ}   in the following subsections.

\subsection{Some lemmas}
 By \cite[Lemma  3.1]{WX2021a},   {when $\gamma\in (0,1)$ and  $\alpha\in (-1/2, 1/2)$,}
there exist positive constants $c_{3, 1}$ and  $c_{3,2}$ such that   for all $0 < s < t$,
 \begin{align}\label{Eq: Zmoment1}
c_{3,1}\frac{|t-s|^{2\alpha+1}}{t^{ \gamma}}\le     \mathbb E\left[\big(Z(t)-Z(s) \big)^2\right]
\le c_{3,2}\frac{|t-s|^{2\alpha+1}}{s^{ \gamma}}.
 \end{align} 

 \subsubsection{Large deviation principle for $\{U_{t_0, h_n}\}_{n\ge1}$}

   To prove Theorem \ref{thm local LILZ},  several technical lemmas are needed.  The following large deviation
   principle can be obtained by applying Theorem 5.2 of Arcones \cite{Arc}.

   \begin{proposition}\label{prop LDP} Assume that  {$\gamma\in (0, 1)$ and $\alpha\in (-1/2,1/2)$}.  
   Let $\{h_n\}_{n\ge1}$ be a sequence of  real numbers  {such that $h_n\rightarrow0 $  as $n\rightarrow \infty$}, and set $\beta_n=  {\log\log} (1/|h_n|)$. Then  for every   $t_0>0$, the sequence $\{U_{t_0, h_n}(x); x\in [0,1]\}_{n\ge1}$ satisfies  the large deviation principle on $C([0,1])$ with the speed $\beta_n$ and  the rate function $I$ defined  in \eqref{eq rate}; i.e.,   the following   hold:
    \begin{itemize}
    \item[(i).] For every $r\ge 0$, the set $\{z\in C([0,1]); \ I(z)\le r\}$ is compact in $C([0,1])$.

    \item[(ii).] For every closed set $\mathscr A\subset C([0,1])$,
    \begin{equation}\label{eq ULD}
    \limsup_{n\rightarrow\infty} \beta_n^{-1}\log \mathbb P\big(U_{t_0, h_n}\in \mathscr A \big)\le -\inf_{z\in \mathscr A}I(z).
    \end{equation}

    \item[(iii).] For every open set $\mathscr B\subset C([0,1])$,
    \begin{equation}\label{eq LLD}
    \liminf_{n\rightarrow\infty} \beta_n^{-1}\log \mathbb P\big(U_{t_0, h_n}\in \mathscr B \big)\ge -\inf_{z\in \mathscr B}I(z).
    \end{equation}
    \end{itemize}
   \end{proposition}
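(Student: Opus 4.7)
The plan is to invoke Arcones' Theorem 5.2 directly on the sequence of centered continuous Gaussian processes $\{U_{t_0,h_n}\}_{n\ge 1}$ taking values in $C([0,1])$. Arcones' result reduces the Gaussian LDP on $C([0,1])$ with speed $\beta_n$ and rate given by the RKHS of a limiting covariance $\rho_{t_0}$ to two ingredients: (a) convergence of the rescaled covariance of $U_{t_0,h_n}$ to $\rho_{t_0}$, and (b) exponential tightness of the laws of $U_{t_0,h_n}$ in $C([0,1])$. If both are verified, the rate function produced by the theorem coincides with the $I$ defined in \eqref{eq rate} (using the RKHS description at the top of Section~2), yielding (i), (ii), (iii).

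For ingredient (a), I would use the covariance formula for $Z$ coming from its stochastic integral representation in \eqref{eq decom} and show, by an elementary computation analogous to \cite[(3.33)]{WX2021a}, that
\begin{equation*}
\beta_n\,\mathbb{E}\bigl[U_{t_0,h_n}(x_1)\,U_{t_0,h_n}(x_2)\bigr]
=\frac{\mathbb{E}\bigl[(Z(t_0+h_nx_1)-Z(t_0))(Z(t_0+h_nx_2)-Z(t_0))\bigr]}{t_0^{-\gamma}|h_n|^{2\alpha+1}}
\longrightarrow \rho_{t_0}(x_1,x_2)
\end{equation*}
as $n\to\infty$, uniformly on $[0,1]^2$. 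The locally uniform convergence follows from the explicit form \eqref{eq limit rho} and a dominated-convergence argument using the integral kernel of $Z$; the same limit holds for $X$ in place of $Z$ since $Y$ has a $C^\infty$ modification near any $t_0>0$ by \cite[Proposition~2.1]{WX2021a}.

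For ingredient (b), the key input is the upper moment bound \eqref{Eq: Zmoment1}. For $h_n$ sufficiently small, $t_0+h_n\min(x_1,x_2)\ge t_0/2$, so
\begin{equation*}
\mathbb{E}\bigl[(U_{t_0,h_n}(x_1)-U_{t_0,h_n}(x_2))^2\bigr]
\le \frac{c_{3,2}|h_n(x_1-x_2)|^{2\alpha+1}}{(t_0/2)^{\gamma}\,t_0^{-\gamma}|h_n|^{2\alpha+1}\beta_n}
\le \frac{C\,|x_1-x_2|^{2\alpha+1}}{\beta_n},
\end{equation*}
uniformly in $n$. Because $2\alpha+1>0$, Dudley's entropy bound (or a standard chaining argument via Borell's Gaussian concentration inequality) then gives, for every $\eta>0$, a compact set $K_\eta\subset C([0,1])$ consisting of uniformly Hölder functions such that $\mathbb{P}(U_{t_0,h_n}\notin K_\eta)\le e^{-\eta\beta_n}$ for large $n$. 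This is the required exponential tightness.

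The main obstacle is (b): the non-stationarity of the increments of $Z$ makes the upper bound in \eqref{Eq: Zmoment1} sensitive to the smaller endpoint $s^{-\gamma}$, so one must restrict to $|h_n|$ small enough that all increments in the definition of $U_{t_0,h_n}$ live in a neighborhood of $t_0$ bounded away from the origin; once that localization is made, the Hölder-type second-moment bound becomes uniform in $n$ and the standard Gaussian exponential-tightness machinery applies. Finally, to identify the rate function, one recognizes from \eqref{eq limit rho} that $\rho_{t_0}$ is the covariance of $c_{2,1}^{1/2}t_0^{-\gamma/2}B^{\alpha+1/2}$, whose RKHS is exactly $\mathcal{H}$ and whose squared norm is $2I(\cdot)$; Arcones' theorem then delivers precisely the rate function $I$ defined in \eqref{eq rate}, completing the proof.
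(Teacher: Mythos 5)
Your proposal is correct and follows essentially the same route as the paper: both apply Arcones' Theorem 5.2, verify the covariance convergence via the tangent-process limit (Proposition 3.1 and (3.33) of \cite{WX2021a}), and obtain the remaining tightness/negligibility condition from the uniform increment bound \eqref{Eq: Zmoment1} combined with a Gaussian chaining estimate (the paper uses Talagrand's Lemma \ref{Lem:Tail} where you invoke Dudley's bound with Borell's inequality, which is the same machinery). The only point to tidy is the normalization in your covariance display: since $U_{t_0,h_n}$ already carries the factor $t_0^{-\gamma/2}$ in its denominator, the limit there is $\rho_{t_0}(x_1,x_2)$ \emph{without} an extra $t_0^{-\gamma}$, a bookkeeping detail that does not affect the argument.
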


   To prove Proposition \ref{prop LDP}, we make use of   the following lemma due to Talagrand \cite{Tal95}.
     \begin{lemma}(\cite[Lemma 2.1]{Tal95})\label{Lem:Tail}
Let $\mathcal X = \{\mathcal X(t), t \in \mathbb R\}$ be a centered real-valued Gaussian process, 
and let $S \subset \mathbb R$ be a closed set equipped with the canonical metric  
$$d_{\mathcal X}(s, t) = \left[\mathbb E\big(\mathcal X(s) -\mathcal  X(t)\big)^2\right]^{1/2}.$$
Then there exists a  constant  $c_{3,3}>0$ such that for every $u >0$, 
\begin{equation*}
\mathbb P\Biggl\{ \sup_{s, t \in S} |\mathcal X(s) - \mathcal X(t)| \ge c_{3,3}\,
\bigg(u + \int_0^D \sqrt{\log N(d_{\mathcal X}, S, \varepsilon)}\, d\varepsilon \bigg)\Bigg\}
\le \exp\bigg( - \frac{u^2} {D^2}\bigg),
\end{equation*}
where $N(d_{\mathcal X}, S, \varepsilon)$ denotes the minimal number of open $d_{\mathcal X}$-balls
of radius $\varepsilon$ needed to cover $S$, and   $D= \sup\{ d_{\mathcal X}(s,t):\,
s,\, t \in S\}$ is the diameter of $S$.
\end{lemma}
 {Note. There is a  typographical error in    Lemma 2.1 of \cite{Tal95}, where     $D$  should be  $D^2$.     
A corrected statement and its justification are provided in Lemma 3.1 of \cite{DMX17}.}

\begin{proof}[Proof of Proposition \ref{prop LDP}] We prove this proposition   by  verifying Conditions (a.1)--(a.4) of Theorem 5.2 in   \cite{Arc}.
For convenience, we  only treat  the  case  $h\in (0,1]$; the case  $h\in [-1,0)$ is analogous. 

  For every   $t_0>0$ and   $h\in (0,1]$, define
  \begin{equation*}
V_{t_0,h}(x) :=\frac{ Z(t_0+hx)-Z(t_0)}{h^{\alpha+1/2}}, \qquad  x\ge0.
 \end{equation*} 
By  (3.33) of   \cite{WX2021a},  we have that for any $x_1, x_2\in [0, \infty)$,
 \begin{align*}
  \lim_{h \rightarrow0}  \mathbb E\big[V_{t_0,h }(x_1) V_{t_0, {h}}(x_2)   \big] = c_{3,4} t_0^{-\gamma}
  \left(x_1^{2\alpha+1}+x_2^{2\alpha+1}-|x_1-x_2|^{2\alpha+1}  \right),
   \end{align*}
   and
 \begin{align*}
 \lim_{h \rightarrow0}    \mathbb E\Big[ \big(V_{t_0, h }(x_1)- V_{t_0, h }(x_2)\big)^2\Big]=2 c_{3,4}  t_0^{-\gamma} \left|x_1-x_2\right|^{2\alpha+1},
   \end{align*}
   where  $c_{3,4}>0$.
Thus,   Conditions (a.1) and (a.2) of Theorem 5.2 in  \cite{Arc} are satisfied.

Additionally,  by  Lemma 3.1 of \cite{WX2021a},  there exists a constant $c_{3,5}>0$ such that for every $t_0>0, h\in (0,1]$, and $x_1, x_2\in [0, \infty)$,
   \begin{align*}
   \mathbb E\left[ \left(V_{t_0, h}(x_1)- V_{t_0, h}(x_2)\right)^2\right]\le c_{3,5} t_0^{-\gamma} \left|x_1-x_2\right|^{2\alpha+1},
   \end{align*}
 which verifies   Condition  (a.4)  of  Theorem 5.2 in   \cite{Arc}.

It remains to verify Condition (a.3) of  Theorem 5.2 in   \cite{Arc}, i.e., 
\begin{align}\label{Eq: a3}
\sup_{x\in [0,1]}|U_{t_0, h_n}(x)| \stackrel{\mathbb P}{\longrightarrow} 0, \ \ \ \text{as } n\rightarrow\infty.
\end{align}
In the following,  we will prove \eqref{Eq: a3} by using   Lemma \ref{Lem:Tail}.

 For any   $h_n\in (0, t_0/2)$, set $S:=[t_0, t_0+h_n]$.  By \eqref{Eq: Zmoment1}, we obtain 
 $$d_Z(x_1, x_2)\le c_{3,6}t_0^{-\gamma/2}|x_1-x_2|^{\alpha+1/2}$$
  for some constant $c_{3,6}>0$. Hence,
\begin{align}\label{Eq: D3}
D\le c_{3,6} t_0^{-\gamma/2}h_n^{\alpha+1/2},
\end{align}
   and
$$
N(d_Z, S, \varepsilon)\le c_{3,6}^{\frac{1}{\alpha+1/2}} h_n t_0^{-\frac{\gamma}{2\alpha+1}} \varepsilon^{-\frac{1}{\alpha+1/2}}.
$$
Using the change of  variables $\varepsilon=c_{3,6} t_0^{-\gamma/2}(h_nv)^{\alpha+1/2}$, we have
\begin{equation}\label{eq: int Z}
\begin{split}
\int_0^D\sqrt{\log  N\left(d_Z, S, \varepsilon\right)  }d\varepsilon\le&\, \int_0^{c_{3,6} t_0^{-\gamma/2}h_n^{\alpha+1/2}}
\sqrt{\log\left( c_{3,6}^{\frac{1}{\alpha+1/2}}    t_0^{-\frac{\gamma}{2\alpha+1}} h_n \varepsilon^{-\frac{1}{\alpha+1/2}}\right)  } d\varepsilon\\
\le  &\, (\alpha+1/2)c_{3,6}t_0^{-\gamma/2}h_n^{\alpha+1/2}\int_0^{1} \sqrt{ \log( 1/v)}  v^{\alpha-1/2}dv\\
\le &\, c_{3,7} t_0^{-\gamma/2}h_n^{\alpha+1/2},
\end{split}
\end{equation}
 {where  $c_{3,7}>0$ and the last inequality holds because}
$$ {\int_0^{1} \sqrt{ \log\left(\frac{1}{v}\right)}  v^{\alpha-1/2}dv<+\infty, \ \ \text{for } \alpha \in \left(-\frac12, \frac12\right).}$$

Since $\beta_n=  {\log\log} (1/h_n)\rightarrow \infty$ as $n\rightarrow\infty$,  applying  Lemma \ref{Lem:Tail}   with \eqref{Eq: D3} and  \eqref{eq: int Z},
there exists a constant $c_{3,8}>0$ such that   for every   $\varepsilon>0$ and all sufficiently large  $n$,
\begin{align*} 
 \mathbb P\left(\sup_{x\in [0,1]}|U_{t_0, h_n}(x)|\ge \varepsilon\right)
=&\,\mathbb P\left(\sup_{x\in [0,1]} \frac{|Z(t_0+ {h_n}x)-Z(t_0)|}{  t_0^{-\gamma/2} h_n^{\alpha+1/2}}
\ge  \beta_n^{1/2} \varepsilon\right)\\
\le &\, {\mathbb P\left(\sup_{x\in [0,1]} \frac{|Z(t_0+ {h_n}x)-Z(t_0)|}{  t_0^{-\gamma/2} h_n^{\alpha+1/2}}
\ge  \frac{\beta_n^{1/2} \varepsilon}{2}+ \int_0^D\sqrt{\log  N(d_Z, S, \varepsilon) }d\varepsilon\right)}\\
\le&\,  \exp\left(-c_{3,8} \beta_n \varepsilon^2\right).  
\end{align*}
 This establishes \eqref{Eq: a3} and completes the proof of Proposition \ref{prop LDP}.
    \end{proof}

\subsubsection{A Fernique-type inequality}
To prove Theorem  \ref{thm local LILZ},  we require the 
following  Fernique-type inequality for the generalized 
Riemann-Liouville FBM $Z$. The argument   is similar to that 
of Lemma 2.4 of \cite{WSX2020};  we include it for completeness.

\begin{lemma}\label{Lem: Fernique}  {Assume that   $\gamma\in (0, 1)$ and $\alpha\in (-1/2,1/2)$}. 
There exist constants $u_0\ge1$ and $c_{3,9}>0$ such that for any $\delta\ge a>0$  and  $u\ge u_0$,
\begin{equation*}\label{eq Fernique}
\begin{split}
&\mathbb P\left(\sup_{t\in [0,  \delta]}\sup_{s\in [0,a]} \left|Z(t_0+t+s)-Z(t_0+t) \right|\ge u\left(\sqrt{\log(\delta/a)}
+\frac{1}{\sqrt{\log(\delta/a)}} \right) a ^{\alpha+1/2}  t_0^{-\gamma/2}\right)\\
&\le\,  \exp\left(-c_{3,9} u^2\right).
\end{split}
\end{equation*}
 \end{lemma}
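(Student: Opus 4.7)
The proof plan is a discretization-plus-tail-bound argument, in the spirit of Lemma 2.4 of \cite{WSX2020}, with Lemma \ref{Lem:Tail} applied to a one-variable subproblem. First, I would set $N := \lceil \delta/a \rceil$ and introduce grid points $t_i = ia$ for $0 \le i \le N$. For any $(t, s) \in [0, \delta] \times [0, a]$ one can pick $i$ with $t \in [t_i, t_{i+1}]$ and write $t = t_i + t'$ with $t' \in [0, a]$; the triangle inequality then yields
\[
|Z(t_0 + t + s) - Z(t_0 + t)| \,\le\, 2 M_i, \qquad M_i := \sup_{r \in [0, 2a]} \big|Z(t_0 + t_i + r) - Z(t_0 + t_i)\big|,
\]
so the double supremum over $[0, \delta] \times [0, a]$ is dominated by $2 \max_{0 \le i \le N} M_i$.

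Next, I would bound the tail of each $M_i$ by applying Lemma \ref{Lem:Tail} to the centered Gaussian process $\{Z(t_0 + t_i + r) - Z(t_0 + t_i) : r \in [0, 2a]\}$. The upper bound in \eqref{Eq: Zmoment1} gives $d_Z(r_1, r_2) \le c\, t_0^{-\gamma/2} |r_1 - r_2|^{\alpha + 1/2}$, so the diameter of $[0, 2a]$ in $d_Z$ is at most $C_1 t_0^{-\gamma/2} a^{\alpha + 1/2}$, and the change of variables used in \eqref{eq: int Z} shows that the entropy integral is of the same order. Lemma \ref{Lem:Tail} therefore supplies, for all $v > 0$,
\[
\mathbb{P}\big(M_i \ge C_3 (v + 1)\, t_0^{-\gamma/2} a^{\alpha + 1/2}\big) \,\le\, \exp\!\big(-v^2 / C_4\big),
\]
and a union bound over $i = 0, 1, \ldots, N$ produces
\[
\mathbb{P}\Big(\sup_{t, s} |Z(t_0 + t + s) - Z(t_0 + t)| \ge 2 C_3 (v + 1)\, t_0^{-\gamma/2} a^{\alpha + 1/2}\Big) \,\le\, (N + 1)\exp\!\big(-v^2 / C_4\big).
\]

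It then remains to calibrate $v = v(u, \delta, a)$. I would take $v^2 = C_4 \big(\log(N + 1) + c_{3,10}\, u^2\big)$, so that the right-hand side above collapses to $\exp(-c_{3,10} u^2)$, and verify that the resulting
\[
v + 1 \,\le\, \sqrt{C_4\,(\log(\delta/a) + 2)} + \sqrt{C_4\, c_{3,10}}\,u + 1
\]
is dominated by a constant multiple of $u\big(\sqrt{\log(\delta/a)} + 1/\sqrt{\log(\delta/a)}\big)$ once $u \ge u_0$ for a sufficiently large threshold $u_0$. This check, which I expect to be the main technical obstacle, splits into the regime $\log(\delta/a) \ge 1$ (where $\sqrt{\log(\delta/a)}$ dominates on the right-hand side and a linear comparison in $u$ suffices) and the regime $0 < \log(\delta/a) < 1$ (where $1/\sqrt{\log(\delta/a)} \ge 1$ absorbs all additive constants). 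The precise form $\sqrt{\log(\delta/a)} + 1/\sqrt{\log(\delta/a)}$ is exactly what permits a single constant $c_{3,10}$ to work uniformly on $0 < a \le \delta$: the $\sqrt{\log(\delta/a)}$ part handles the genuine logarithmic factor from the union bound when $a \ll \delta$, while the $1/\sqrt{\log(\delta/a)}$ part soaks up the constant offset in $v + 1$ and $\log(N+1)$ when $a$ is close to $\delta$.
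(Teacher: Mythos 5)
Your argument is correct, but it reaches the conclusion by a different route than the paper. The paper applies Lemma \ref{Lem:Tail} \emph{once}, to the two-parameter Gaussian field $\xi(t,s)=Z(t_0+t+s)-Z(t_0+t)$ on the rectangle $S=[0,\delta]\times[0,a]$: the covering number of $S$ is of order $a\delta\,\varepsilon^{-2/(\alpha+1/2)}$, and the factor $\sqrt{\log(\delta/a)}+1/\sqrt{\log(\delta/a)}$ emerges from evaluating the Dudley entropy integral over this rectangle (the computation \eqref{eq int}). You instead discretize the $t$-variable into $N\approx\delta/a$ blocks, apply Lemma \ref{Lem:Tail} to each \emph{one-parameter} process on $[0,2a]$ (where the entropy integral is merely of the order of the diameter, as in \eqref{eq: int Z}), and recover the $\sqrt{\log(\delta/a)}$ from the union bound via $\log(N+1)$. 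Both are sound; each step of yours checks out, including the reduction $|Z(t_0+t+s)-Z(t_0+t)|\le 2M_i$ and the final calibration, where choosing $c_{3,10}$ small enough absorbs the $\log(N+1)$ term and the additive constants into $u\bigl(\sqrt{\log(\delta/a)}+1/\sqrt{\log(\delta/a)}\bigr)$ for $u\ge u_0$. Your version has the minor advantage of only invoking Lemma \ref{Lem:Tail} for a process indexed by a subset of $\mathbb R$, which is literally the setting in which the paper states that lemma (the paper's own proof applies it to a subset of $\mathbb R^2$, which is harmless since Talagrand's inequality holds for general index sets, but is strictly outside the stated hypothesis). The trade-off is that your constant $c_{3,10}$ must be taken small to pay for the union bound, whereas the paper's single application gives the Gaussian exponent directly from $u^2/D^2$; for the purposes of Theorem \ref{thm local LILZ} this makes no difference.
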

   \begin{proof} Let    $\{\xi(t,s)\}_{t, s\ge0}$ be a Gaussian process defined by
   $$\xi(t,s):=Z(t_0+t+s)-Z(t_0+t).$$
  Let $S:=[0,\delta]\times[0,a]$. By  \eqref{Eq: Zmoment1},    we have 
   \begin{equation*}
   \begin{split}
    \, d_{\xi}\big((t_1, s_1), (t_2, s_2)\big)
   \le&\,   \left(\mathbb E\left[\big(Z(t_0+t_1+s_1)-Z(t_0+t_2+s_2)\big)^2 \right]\right)^{1/2}\\
   &\, \, +  \left(\mathbb E\left[\big(Z(t_0+t_1)- {Z}(t_0+t_2)\big)^2 \right]\right)^{1/2}\\
   \le &\,  2c_{3,2}^{\frac12}t_0^{-\gamma/2}\left(|t_1-t_2|^{\alpha+1/2}+|s_1-s_2|^{\alpha+1/2}\right),
   \end{split}
   \end{equation*} 
   where $c_{3,2}$ is the constant appeared in \eqref{Eq: Zmoment1}.
   
 Using \eqref{Eq: Zmoment1} again,    we have    {that for  any $s_1, s_2\in (0,a)$},
  \begin{equation}\label{eq D2}
   \begin{split}
   D=&\sup_{(t_i, s_i)\in S, i=1,2} \Bigg\{\mathbb E\Big( \big[Z(t_0+t_1+s_1)-Z(t_0+t_1)\big] 
     -\left[Z(t_0+t_2+s_2)-Z(t_0+t_2) \right] \Big)^2 \Bigg \}^{1/2}\\
   \le &\,       \left(\mathbb E\left[\left(Z(t_0+t_1+s_1)-Z(t_0+t_1)\right)^2 \right]\right)^{1/2} \\
&\,\,    + \left(\mathbb E\left[\left(Z(t_0+t_2+s_2)- {Z}(t_0+t_2)\right)^2 \right]\right)^{1/2}\\
   \le &\,   {2c_{3,2}^{\frac12}}t_0^{-\gamma/2} a^{\alpha+1/2},
   \end{split}
   \end{equation}
    and
   \begin{equation*}
   N(d_{\xi}, S,\varepsilon)\le \left( {2c_{3,2}^{\frac12}}\right)^{\frac{2}{\alpha+1/2}} t_0^{-\frac{\gamma}{ \alpha+1/2}}   \frac{a\delta}{\varepsilon^{2/(\alpha+1/2)}}.
   \end{equation*}
  Consequently,   
   \begin{equation}\label{eq int}
   \begin{split}
   &\int_0^D\sqrt{\log  N(d_{\xi}, S,\varepsilon)  } d\varepsilon\\
   \le&\,  \int_0^{ {2c_{3,2}^{\frac12}} t_0^{-\gamma/2}a^{\alpha+1/2}}\sqrt{ \log  \left( \left(  {2c_{3,2}^{\frac12}}\right)^{\frac{2}{\alpha+1/2}} t_0^{-\frac{\gamma}{ \alpha+1/2}}
   \frac{a\delta}{\varepsilon^{2/(\alpha+1/2)}}\right)  } d\varepsilon\\
   = &\,  {2c_{3,2}^{\frac12}} t_0^{-\gamma/2}\int_0^{ a }\sqrt{2 \log \left(   \frac{\sqrt{a\delta}}{ t }\right)  }  d t^{\alpha+1/2}\\
  =&\,  { 2c_{3,2}^{\frac12}} t_0^{-\gamma/2}\left(\sqrt{\log( \delta/a)} a^{\alpha+1/2}+ \int_0^{a} \frac{1}{\sqrt{2 \log \left(    \frac{\sqrt{a\delta}}{ t }\right)  }} t^{\alpha-1/2}dt \right)\\
    \le &\,  {2c_{3,2}^{\frac12} }t_0^{-\gamma/2}  \left(\sqrt{\log(\delta/a)}a^{\alpha+1/2}+   \frac{2}{\sqrt{\log( \delta/a )}} a^{\alpha+1/2}\int_0^{\infty} ue^{-(\alpha+1/2)u^2}du \right)\\
    \le&\,   {4c_{3,2}^{\frac12}  }
\cdot \int_0^{\infty} ue^{-(\alpha+1/2)u^2}du         \left(\sqrt{\log(\delta/a)}  + \frac{1}{\sqrt{\log(\delta/a)}}  \right)  a^{\alpha+1/2} t_0^{-\gamma/2} \\
=: &\,  c_{3,10}   \left(\sqrt{\log(\delta/a)}  + \frac{1}{\sqrt{\log(\delta/a)}}  \right)   a^{\alpha+1/2} t_0^{-\gamma/2}.
   \end{split}
   \end{equation}
 {Here,  $c_{3,10}>0$,   the first and second-last steps follow from the changes of variables   $\varepsilon= 2c_{3,2}^{\frac12} t_0^{-\gamma/2}t^{\alpha+1/2}$ 
and    $t=a e^{-u^2}$,  respectively.}
 
  By Lemma \ref{Lem:Tail}, \eqref{eq D2}, and  \eqref{eq int},   we have that  for  all $u\ge 2 c_{3,10}$,
      \begin{align*}
     &   {\mathbb P \left(\sup_{t\in [0,  \delta]}\sup_{s\in [0,a]} \left|Z(t_0+t+s)-Z(t_0+t) \right|\ge u\left(\sqrt{\log(\delta/a)}
+\frac{1}{\sqrt{\log(\delta/a)}} \right) a ^{\alpha+1/2}  t_0^{-\gamma/2}\right) }\\ 
  {\le }&\,   { \mathbb P\Bigg(\sup_{t\in [0,  \delta]}\sup_{s\in [0,a]} \left|Z(t_0+t+s)-Z(t_0+t) \right| }\\
 &\,\,\,\, \,\,\,\, \,\,\,\, \,\,\,\,   { \ge \frac{u}{2}\left(\sqrt{\log(\delta/a)}+\frac{1}{\sqrt{\log(\delta/a)}} \right) a ^{\alpha+1/2}  t_0^{-\gamma/2} +\int_0^D\sqrt{\log  N(d_{\xi}, S,\varepsilon)  } d\varepsilon
 \Bigg) }\\
  &  { \le\,  \exp\left(-c_{3,9} u^2\right), }
   \end{align*}   
   where $c_{3,9}>0$.   The proof is complete.
   \end{proof}

\subsubsection{Lamperti's transformation}\label{sec:Lamperti}  {To prove \eqref{eq equiv2b},
we  apply Lamperti's transformation, following the approach in   \cite{TX2007} and \cite{WX2021a}.}

Define the centered stationary Gaussian process $U =\{U (t)\}_{t\in\mathbb R}$ via  Lamperti's transformation of $Z$:
 \begin{align*}
 U(t):=e^{-tH} Z\left(e^t\right)  \ \ \ \text{for all } t\in \mathbb R.
 \end{align*}
 {Denote by   $r_U(t):=\mathbb E \big[U (0)U (t)\big]$   the covariance function.  A direct computation yields 
 \begin{align}\label{eq rt}
 r_U(t)=&\, e^{-tH}\int_0^{1\wedge e^t}\left(e^t-u\right)^{\alpha}(1-u)^{\alpha} u^{-\gamma}du \ \ \ \ \text{for all } t\in \mathbb R.
 \end{align}}
       \begin{lemma}\label{lem ru}  {
 The covariance function $r_U(t)$  is an even function and
    \begin{align}\label{eq rulim}
  \lim_{t\rightarrow +\infty} e^{t(1-\gamma)/2} r_U(t)   =\mathcal B(1+\alpha, 1-\gamma),
    \end{align}
    where $\mathcal B$ denotes the Beta function.    }
 \end{lemma}
  \begin{proof} 
    {
     For any $t>0$,  using \eqref{eq rt} and the change of variables $u=e^{-t}v$, we obtain}
    {     \begin{align*}   
 r_U(-t)=&\, e^{tH} \int_0^{e^{-t}} \left(e^{-t}-u\right)^{\alpha} (1-u)^{\alpha} u^{-\gamma}du \\
 =&\,  e^{tH} e^{-(2\alpha-\gamma+1)t} \int_0^{1} \left(1- v \right)^{\alpha}\left(e^{t}-v\right)^{\alpha}  v^{-\gamma} dv\\
 =&\, e^{-tH}  \int_0^{1}  \left(e^{t}-v\right)^{\alpha}  \left(1- v \right)^{\alpha} v^{-\gamma} dv\\
 =&\, r_U(t).
 \end{align*} }
       {Hence, $r_U$ is an even function.}  {Next, we prove \eqref{eq rulim}.}
      
          {By  \eqref{Eq:H}, \eqref{eq rt}, and the dominated convergence theorem, we have}
     {          \begin{align*}  
     \lim_{t\rightarrow +\infty}   e^{t(1-\gamma)/2}  r_U(t)  
     = &\,  \lim_{t\rightarrow +\infty}    \int_0^{1}  \left(1-e^{-t}u\right)^{\alpha}  \left(1- u \right)^{\alpha} u^{-\gamma} du\\
     = &   \mathcal B(1+\alpha, 1-\gamma).
      \end{align*}}
        {The proof is complete. }
  \end{proof}
  
By Bochner's theorem (see Lo\'eve \cite[Page 220]{Lo}), $r_U$
is the Fourier transform of a finite measure $F_U$, called the 
spectral measure of $U$. 
From Lemma \ref{lem ru}, we have  $r_U(\cdot)\in L^1(\mathbb R)$. Consequently,    $F_U$ has a continuous spectral
 density function $f_U$, which can be expressed as the inverse Fourier transform of $r_U(\cdot)$:
 $$
 f_U(\lambda)=\frac1\pi\int_0^{\infty}r_U(t)\cos(t\lambda)\,dt  \quad \hbox{ for all } \, \lambda \in \mathbb R.
 $$
  It is well known that the stationary Gaussian process $U $  admits the stochastic integral representation 
   \begin{align}\label{eq U int}
   U(t)=\int_{\mathbb R} e^{i\lambda t}\, W(d\lambda)    \ \ \ \ \text{for all } t\in \mathbb R,
   \end{align}
 where $W$ is a complex Gaussian random measure whose control measure is $F_U$.

 We recall the following estimates from Wang and Xiao \cite[Theorem 4.2]{WX2021a}.

 \begin{lemma}\cite[Theorem 4.2]{WX2021a}\label{lem spectral}
Assume that  $\gamma \in(0,1)$ and $\alpha\in (-1/2, 1/2)$. There exist   constants $u_0>0$ and
$c_{3,11}>0$   such that for any $u >u_0$,
 \begin{align} 
 &\int_{|\lambda|<u}\lambda^2 f_U(\lambda) d\lambda\le  \, c_{3,11} u^{1-2\alpha}, \label{Eq: sp1}\\
 &   \int_{|\lambda|\ge u}  f_U(\lambda) d\lambda\le  \,  c_{3,11} u^{-(2\alpha+1)}. \label{Eq: sp2}
 \end{align}
 \end{lemma}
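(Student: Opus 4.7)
\textbf{Plan for proving Lemma~\ref{lem spectral}.} The approach is a standard Tauberian-type reading of the spectral density from a small-scale variance estimate for $U(t)-U(0)$. The key intermediate step is the sharp two-sided bound
\[
c\,t^{2\alpha+1}\le \mathbb E\bigl[(U(t)-U(0))^2\bigr]\le C\,t^{2\alpha+1},\qquad 0<t\le 1.
\]
This follows from the decomposition $U(t)-U(0)=e^{-tH}\bigl(Z(e^t)-Z(1)\bigr)+\bigl(e^{-tH}-1\bigr)Z(1)$. Applying \eqref{Eq: Zmoment1} (with $s=1$, $t\mapsto e^t$) together with $|e^t-1|\asymp t$ and $|e^{-tH}-1|\lesssim t$, the first term contributes $\asymp t^{2\alpha+1}$, the second contributes $O(t^2)$, and the cross term contributes $O(t^{\alpha+3/2})$; both remainders are absorbed because $\alpha<1/2$ forces $2\alpha+1<2$ and $2\alpha+1<\alpha+3/2$.

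With the variance estimate in hand, \eqref{Eq: sp2} follows from the spectral identity
\[
\mathbb E\bigl[(U(t)-U(0))^2\bigr]=2\int_{\mathbb R}\bigl(1-\cos(t\lambda)\bigr)f_U(\lambda)\,d\lambda
\]
after averaging in $t$ to suppress oscillations of $1-\cos$: setting $\Phi(x):=\int_1^2(1-\cos(xs))\,ds=1-(\sin(2x)-\sin x)/x$, one checks $\Phi(x)\ge 1/2$ for $|x|$ sufficiently large, so integrating the identity in $s\in[1,2]$ bounds $\int_{|\lambda|\ge c/t}f_U(\lambda)\,d\lambda$ by a constant multiple of $\sup_{s\in[1,2]}\mathbb E[(U(ts)-U(0))^2]\lesssim t^{2\alpha+1}$; the substitution $u=c/t$ gives the claim. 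For \eqref{Eq: sp1}, the Taylor bound $1-\cos(x)\ge x^2/4$ on $|x|\le 1$ yields, with $t=1/u$,
\[
\int_{|\lambda|<u}\lambda^2 f_U(\lambda)\,d\lambda\le \frac{4}{t^2}\int_{\mathbb R}\bigl(1-\cos(t\lambda)\bigr)f_U(\lambda)\,d\lambda=\frac{2}{t^2}\mathbb E\bigl[(U(t)-U(0))^2\bigr]\lesssim t^{2\alpha-1}=u^{1-2\alpha}.
\]

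The delicate step is the variance estimate: the Lamperti time-change introduces the extra term $(e^{-tH}-1)Z(1)\sim tZ(1)$ with second moment of order $t^2$, which competes with the genuine increment contribution $t^{2\alpha+1}$ and would dominate it if $\alpha$ were allowed to reach $1/2$. This is precisely where the hypothesis $\alpha<1/2$ enters in an essential way. Once the variance bound is secured, the two spectral estimates follow by the uniform scheme of testing $f_U$ against $1-\cos(t\lambda)$ at the scale $t=1/u$, differing only in how this test function is averaged (for the high-frequency tail \eqref{Eq: sp2}) or majorized by $\lambda^2$ (for the low-frequency moment \eqref{Eq: sp1}) on the relevant range.
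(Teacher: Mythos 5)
The paper does not prove this lemma: it is quoted verbatim from \cite[Theorem 4.2]{WX2021a}, so there is no internal proof to compare against. Your argument is nevertheless correct and self-contained, and it follows the standard Tauberian scheme that the cited reference uses for exactly this kind of statement: derive $\mathbb E[(U(t)-U(0))^2]\lesssim t^{2\alpha+1}$ for small $t>0$ from \eqref{Eq: Zmoment1} via the decomposition $U(t)-U(0)=e^{-tH}\bigl(Z(e^t)-Z(1)\bigr)+(e^{-tH}-1)Z(1)$, then test $f_U$ against $1-\cos(t\lambda)$ at scale $t=1/u$, averaging over $s\in[1,2]$ so that $\Phi(x)\ge 1/2$ for $|x|$ large (which gives \eqref{Eq: sp2}) and using $1-\cos x\ge x^2/4$ on $|x|\le 1$ (which gives \eqref{Eq: sp1}). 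Your bookkeeping is sound: the cross term is $O(t^{\alpha+3/2})$ and the Lamperti correction is $O(t^2)$, both of smaller order than $t^{2\alpha+1}$ precisely because $-1/2<\alpha<1/2$, and by stationarity of $U$ it suffices to treat $t>0$. Note that only the upper half of your two-sided variance estimate is actually needed for the two upper bounds being proved.
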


\subsection{Proof of Theorem \ref{thm local LILZ}}  
To prove Theorem \ref{thm local LILZ}, it suffices to establish \eqref{eq equiv1b} and \eqref{eq equiv2b}, which will be proved separately below.

   \begin{proof}[Proof of   \eqref{eq equiv1b}]
For convenience, we only consider the case   $h\in [0,1]$, the case   $h\in [-1,0]$ is analogous.
Let $1<\theta<2$ be a constant to be chosen later. For any $k\in \mathbb Z_+$,    let  $h_k:= \theta^{- k}$ and  
$$
A_k:=\left\{h\in [0,1]: \, \theta^{-(k+1)}\le h\le \theta^{-k} \right\}.
$$
 {Recall $\mathcal S$ defined by \eqref{eq unit ball}.}
By \eqref{eq ULD},  for all $\varepsilon>0$ and $\mu>0$, there exists a constant $k_0>0$ such that for all $k\ge k_0$,
\begin{equation}
\begin{split}\label{Eq:3.24}
\mathbb P\left(\inf_{f\in  {\mathcal S}}\|U_{t_0, h_k}-f\|_{\infty}\ge \varepsilon  \right)
 = &\,\mathbb P\big( U_{t_0, h_k}\in ( {\mathcal S^{\varepsilon}})^c \big)\\
 \le & \, c_{3,12}\left[k\log(\theta)\right]^{-(1-\mu)\inf_{z\in ( {\mathcal S^{\varepsilon}})^c}  I (z) },
\end{split}
\end{equation}
where $c_{3,12}>0$, $\left(\mathcal S^{\varepsilon}\right)^c$ denotes the complement of the set $\mathcal S^{\varepsilon}$, and
$$
\mathcal S^{\varepsilon}:=\left\{g\in C([0,1]):\, \inf_{f\in \mathcal  S}\|f-g\|_{\infty}<\varepsilon \right\}.
$$
Choosing  $\mu$ small enough so that
$$
\eta:=(1-\mu)\inf_{z \in ( {\mathcal S^{\varepsilon}})^c}  I (z)>1, 
$$
the last terms in \eqref{Eq:3.24} are summable in $k$. By the Borel-Cantelli  lemma, we have
\begin{equation}\label{eq: upper1}
L_1:=\limsup_{k\rightarrow\infty}\inf_{f\in \mathcal S}\|U_{t_0, h_k}-f\|_{\infty}=0, \ \ \ \text{a.s.}
\end{equation}

Applying  Lemma \ref{Lem: Fernique} with $\delta=\theta^{-k}$ and $a=(1-\theta^{-1})\theta^{-k}$, we have that for every $\varepsilon>0$,
\begin{equation}\label{eq: upper2}
\begin{split}
&\,\mathbb P\left(\sup_{t\in [0,\theta^{-k}]}\sup_{s\in [0,(1-\theta^{-1})\theta^{-k}]}
 \frac{|Z(t_0+t+s)-Z(t_0+t)|}{  {t_0^{-\gamma/2}} \theta^{-k(\alpha+1/2)}    \sqrt{\log\log \theta^{k(\alpha+1/2)}}}>\varepsilon\right)\\
\le &\,  \exp\left( - c_{3,13} \varepsilon^2 c(\theta)   \log\log \theta^{k(\alpha+1/2)}\right),
\end{split}
\end{equation}
where  $c_{3,13}>0$ and $$c(\theta):=\frac{ (1-\theta^{-1})^{- {(2\alpha+1)}}  }{\left(\sqrt{\log\frac{\theta}{\theta-1}}
+ 1/{\sqrt{\log\frac{\theta}{\theta-1}}}\right)^2}.$$
Taking  $1<\theta<2$  sufficiently  close to $1$ so that $c_{3,13}c(\theta)\varepsilon^2>1$,  the series on the left-hand side of \eqref{eq: upper2} is  summable in $k$. 
By the Borel-Cantelli lemma,
\begin{equation}\label{eq: upper3}
\begin{split}
L_2:=& \limsup_{k\rightarrow\infty}\sup_{h\in A_k} \|U_{t_0,h}-U_{t_0, h_k}\|_{\infty}\\
=&\,\limsup_{k\rightarrow\infty}\sup_{h\in A_k}\sup_{x\in [0,1]}\frac{|Z(t_0+hx)-Z(t_0+h_kx)|}{ {t_0^{-\gamma/2}}h^{\alpha+1/2}\sqrt{\log\log (1/h) }}\\
\le&\, \limsup_{k\rightarrow\infty}\sup_{t\in [0, \theta^{-k}]}\sup_{s\in [0,(1-\theta^{-1}) \theta^{-k}]}\frac{|Z(t_0+t+s)-
Z(t_0+t)|}{ {t_0^{-\gamma/2}}\theta^{-(k+1)(\alpha+1/2)} \sqrt{\log\log \theta^{ k(\alpha+1/2)}} }\\
= &\, 0, \ \ \ \text{a.s.}
\end{split}
\end{equation}
Combining \eqref{eq: upper1} and \eqref{eq: upper3} yields \eqref{eq equiv1b}.  The proof is complete. 
\end{proof}

 \begin{proof}[Proof of \eqref{eq equiv2b}]
 Let $f\in \mathcal S$ and $0<\tau<1$. For $n\ge1$, let $h_n=\exp\left(-n^{1+\tau}\right)$. To prove  \eqref{eq equiv2}, it suffices  to show that
\begin{equation}\label{eq equiv2'}
\liminf_{n\rightarrow\infty}\|U_{t_0, h_n}-f\|_{\infty}=0 \ \ \ \ \text{a.s.}
\end{equation}

By \eqref{eq LLD},  for every $\varepsilon>0$ and $\mu>0$, there exists   $n_0>0$ such that for all $n\ge n_0$,
\begin{equation*}
\begin{split}
\mathbb P\big( \|U_{t_0, h_n}-f\|_{\infty}< \varepsilon  \big)=&\,\mathbb P\big( U_{t_0, h_n}\in  B(f,\varepsilon) \big)\\
\ge &\,     c_{3,14}  n^{-(1+\tau)(1+\mu)\inf_{g\in  B(f,\varepsilon)}  I (g) },
\end{split}
\end{equation*}
where  $c_{3,14}>0$ and
$$
B(f, \varepsilon):=\Big\{g\in C([0,1]);\, \|g-f\|_{\infty}<\varepsilon \Big\}.
$$
Since $\inf_{g\in  B(f,\varepsilon)}  I (g)<1$, we can choose  $\mu$ and $\tau$ sufficiently small enough so that
$$
 (1+\tau)(1+\mu)\inf_{g\in  B(f,\varepsilon)}  I (g) \le 1.
$$
Consequently, 
\begin{align}\label{eq: lower-0}
\sum_{n=1}^{\infty}\mathbb P\left(\|U_{t_0, h_n}-f\|_{\infty}<\varepsilon \right)=+\infty.
\end{align}
Because the events in the summands of \eqref{eq: lower-0} are not independent, some additional care
is required to  complete   the proof.

Following the approach in  Tudor and Xiao \cite{TX2007} and Wang and Xiao \cite{WX2021a},
we will employ  the following stochastic integral representation of $Z$, which follows from the spectral representation \eqref{eq U int} of $U$:
\begin{equation}\label{Eq:Rep1}
Z(t) = t^{H} \int_{\mathbb R} e^{i \lambda \log t}\,
W(d\lambda),\ \ \ \ \ t>0.
\end{equation} 

For $n\ge 3$, let
$$
d_n:=\exp\left(n^{1+\tau}+n^{\tau} \right).
$$
Define two Gaussian processes $Z_n$ and $\widetilde{Z}_n$, respectively, by
\begin{align*}%
Z_n(t) =&\,  t^{H} \int_{|\lambda| \in (d_{n-1}, d_n]} e^{i \lambda\log  t}\, W(d\lambda),\\ 
  \widetilde{Z}_n(t) =&\,  t^{H} \int_{|\lambda| \notin (d_{n-1}, d_n]}  e^{i \lambda \log  t}\, W(d\lambda). 
\end{align*}
  Clearly, 
\begin{equation}\label{Eq:Xn3}
Z(t) = Z_n(t) + \widetilde{Z}_n(t) \ \ \text{for all  } t >0.  
\end{equation}
Observe that  the Gaussian processes $Z_n\, (n = 1, 2, \ldots)$ are
independent and, moreover, for each $n \ge 1$, the processes $Z_n$ and $\widetilde{Z}_n$ are independent as well.

For any $|s|<h_n$, we have 
\begin{equation}\label{Eq:J1}
\begin{split}
\mathbb E\Big(\big(\widetilde{Z}_n(t+s) - \widetilde{Z}_n(t)\big)^2\Big) &=
\int_{|\lambda|\le d_{n-1}} \big| (t+s)^{H} \, e^{i \lambda \log  (t+s)} -
t^{H} \, e^{i \lambda \log  t}\big|^2 \,f_U(\lambda)\, d\lambda\\
&\quad + \int_{|\lambda| > d_{n}} \big| (t+s)^{H} \, e^{i \lambda
\log  (t+s)} - t^{H} \, e^{i \lambda \log  t}\big|^2 \,f_U(\lambda)\,
d\lambda\\
&=: \mathcal J_1 + \mathcal J_2.
\end{split}
\end{equation}

The second term $\mathcal J_2$ is easy to estimate.  For any $|s|\le  {h_n}$,  by   \eqref{Eq: sp2}, there exists $c_{3,15}>0$ such  that
\begin{equation}\label{Eq:J2}
\begin{split}
\mathcal {J}_2 \le&\, 2\,\left(t^{2H}+ (t+s)^{2H}\right)\, \int _{|\lambda| >
d_{n}}\,f_U(\lambda)\, d\lambda\\
\le&\, c_{3,15}\,   \,
d_n^{-(2\alpha+1)}\\
=&\, c_{3,15}\,   \,
h_n^{2\alpha+1} \exp\left(-(2\alpha+1)n^{\tau}\right).
\end{split}
\end{equation} 

To bound the first term $\mathcal J_1$,   using the    elementary inequalities:
 \begin{align*}
\bigl|(x+y)^H-x^H\bigr| &\le |H| \max\left\{x^{H-1},\,(x+y)^{H-1}\right\}|y|, && (0\le |y|<x),\\ 
1-\cos x &\le x^{2}, && (x\in\mathbb R),\\ 
\log (1+x) &\le x, && (x\ge0),
\end{align*}
 we   obtain  that for any $|s|\le h_n$,
\begin{equation}\label{Eq:J3}
\begin{split}
\mathcal J_1 =&\, \int_{|\lambda|\le d_{n-1}} \left[\left((t+s)^{H} -
t^{H}\right)^2  + 2 (t+s)^{H}\,t^{H} \Big( 1 - \cos\Big( \lambda \log 
\frac{t+s}t\Big)\Big)\right] \,f_U(\lambda)\, d\lambda\\
  \le &\,   {H^2}\max\left\{ t^{2(H-1)},  (t+s)^{2(H-1)}  \right\}s^{2}  \int_{\mathbb R}
\,f_U(\lambda)\, d\lambda \\
& \quad \ + 2(t+s)^{H}\, t^H\, \log  ^2\left(1+\frac{s}t\right)\,
\int_{|\lambda|\le d_{n-1}}  \lambda^2
\,f_U(\lambda)\, d\lambda\\
  \le &\, c_{3,16}(t)  \left( \int_{\mathbb R}
\,f_U(\lambda)\, d\lambda+ \int_{|\lambda|\le d_{n-1}}  \lambda^2
\,f_U(\lambda)\, d\lambda \right)h_n^{2}.
\end{split}
\end{equation}
Here, the positive constant $c_{3,16}(t)$ depends on $t$ but is independent of $s$.

It follows from the continuity of $f_U$ and \eqref{Eq: sp2} 
that
\begin{equation}\label{Eq:J4}
\int_{\mathbb R} \,f_U(\lambda)\, d\lambda  <+\infty.
\end{equation}
Moreover, using  \eqref{Eq: sp1}, we  obtain that for all sufficiently large  $n$,
\begin{equation}\label{Eq:J5}
\begin{split}
& \left(\int_{|\lambda|\le d_{n-1}}  \lambda^2
\,f_U(\lambda)\, d\lambda \right)h_n^{2} \\ 
\le  &\, c_{3,11}  \exp\left((1-2\alpha)\left[(n-1)^{1+\tau}+(n-1)^{\tau}\right] \right)  \exp\left(-2n^{1+\tau} \right)\\
= &\,  c_{3,11}  \exp\left(-(1-2\alpha)\left[n^{1+\tau}-(n-1)^{1+\tau}-(n-1)^{\tau}\right] \right)  \exp\left(-(1+2\alpha)n^{1+\tau} \right)\\
\le &\,  c_{3,11}  \exp\left(-(1-2\alpha)\tau n^{\tau}/2 \right) \exp\left(-(1+2\alpha)n^{1+\tau} \right),
\end{split}
\end{equation} 
 {where the last inequality holds, since      }
$$ {
n^{1+\tau}-(n-1)^{1+\tau}-(n-1)^{\tau}>\frac{\tau n^{\tau}}{2}    \ \   \text{ for all sufficiently  large  } n. }
 $$
  Thus, by \eqref{Eq:J1}-\eqref{Eq:J5}, we obtain   for any $\alpha<1/2$ that 
\begin{align*}
\mathcal J_1 +\mathcal J_2\le c_{3,17} \exp\left(- c_{3,18}n^{\tau}\right) \exp\left(-(2\alpha+1)n^{1+\tau}\right),
\end{align*}
for some constants $c_{3,17}, c_{3,18}>0$. Therefore,   the diameter $D$ of $S:=[t_0, t_0+h_n]$ satisfies 
 $$D< c_{3,17}^{1/2} \exp\left(- c_{3,18} n^{\tau}/2 \right)h_n^{\alpha+1/2}, $$
  for  all sufficiently large $n$.

 {
By \eqref{Eq: Zmoment1}, \eqref{Eq:Xn3} and the independence of  $Z_n$ and $\widetilde{Z}_n$,    we have
  }
 \begin{align*} 
 {d_{\widetilde{Z}_n}(s,t)} & {= \left(\mathbb E\left[|\widetilde{Z}_n(t_0+s)-\widetilde{Z}_n(t_0+t)|^2\right]\right)^{\frac12}   } \\
& {\le \left(\mathbb E\left[| Z(t_0+s)- Z(t_0+t)|^2\right]\right)^{\frac12}} \\
& {\le c_{3, 2} t_0^{-\gamma/2}|t-s|^{\alpha+1/2},}
\end{align*}
 {where the constant  $c_{3,2}$ is the one appearing  in  \eqref{Eq: Zmoment1}.   Consequently, 
  $$
N(d_{\widetilde{Z}_n}, S, \varepsilon)\le  {c_{3,2}^{\frac{1}{\alpha+1/2}}} h_n \varepsilon^{-1/(\alpha+1/2)}t_0^{-\gamma/(2\alpha+1)}.
$$
}  Similarly to \eqref{eq int}, we obtain
$$
\int_0^D\sqrt{\log( N(d_{\widetilde{Z}_n}, S, \varepsilon))}d\varepsilon\le c_{3,19} \exp\big(-c_{3,20}  n^{\tau}\big)h_n^{\alpha+1/2},
$$
for some constants $c_{3,19}, c_{3,20}>0$. Hence, by Lemma \ref{Lem:Tail}, there  exist  $u_0, c_{3,21}, c_{3,22}>0$ such that for every $u\ge u_0$,
\begin{equation*}
\mathbb P\left(\sup_{x\in [0,1]}\left|\widetilde{Z}_n(t_0+h_nx)-\widetilde{Z}_n(t_0)\right|
\ge u\exp\left(-c_{3,21} n^{\tau}\right)h_n^{\alpha+1/2}\right)
\le  \exp\left(-c_{3,22} u^2\right).
\end{equation*}
This implies that for every $\varepsilon>0$,
\begin{equation}\label{eq: sum1}
\sum_{n=1}^{\infty} \mathbb P\left(\frac{\sup_{x\in [0,1]}\left|\widetilde{Z}_n(t_0+h_nx)-\widetilde{Z}_n(t_0)\right|   }
{ t_0^{-\gamma/2}h_n^{\alpha+1/2}\sqrt{\log\log( 1/h_n)}} \ge \varepsilon\right)<+\infty.
\end{equation}
Hence, by the Borel--Cantelli lemma, we have
\begin{equation}\label{eq: lim1}
I_1:=\limsup_{n\rightarrow\infty}\sup_{x\in [0,1]}\frac{ \left|\widetilde{Z}_n(t_0+h_nx)-\widetilde{Z}_n(t_0)\right|   }
{ t_0^{-\gamma/2}h_n^{\alpha+1/2}\sqrt{\log\log( 1/h_n)}}=0 \ \ \ \text{a.s.}
\end{equation}

It follows from \eqref{eq: lower-0} and \eqref{eq: sum1} that
\begin{align*}
&\sum_{n=1}^{\infty}\mathbb P\Bigg(\sup_{x\in [0,1]}\bigg|\frac{   Z_n(t_0+h_nx)- Z_n(t_0)   }{ t_0^{-\gamma/2}
h_n^{\alpha+1/2}\sqrt{\log\log (1/h_n)}}  -f(x)\bigg|  \le  2\varepsilon\Bigg)\\
 \ge &\,   \sum_{n=1}^{\infty}\mathbb P\big(\|U_{t_0, h_n}-f\|_{\infty}<\varepsilon \big)-
\sum_{n=1}^{\infty} \mathbb P\Bigg(\frac{\sup_{x\in [0,1]}\bigg|\widetilde{Z}_n(t_0+h_nx)-\widetilde{Z}_n(t_0)\bigg|   }
{ t_0^{-\gamma/2}h_n^{\alpha+1/2}\sqrt{\log\log( 1/h_n)}} \ge \varepsilon\Bigg)\\
 =\, & +\infty.
\end{align*}
Since the processes $\{Z_n\}_{n\ge3}$ are   independent,   the Borel--Cantelli lemma yields
\begin{equation}\label{eq: lim2}
I_2:=\liminf_{n\rightarrow\infty} \sup_{x\in [0,1]} \bigg|\frac{   Z_n(t_0+h_nx)- Z_n(t_0)  }
{ t_0^{-\gamma/2}h_n^{\alpha+1/2}\sqrt{\log\log (1/h_n)}}-f(x)\bigg|=0 \ \ \ \text{a.s.}
\end{equation}
 Combining  \eqref{eq: lim1} and \eqref{eq: lim2}, we  obtain
\begin{equation*}
 \liminf_{n\rightarrow\infty} \|U_{t_0, h_n}-f\|_{\infty}\le I_1+I_2=0\ \ \ \ \text{a.s.}
\end{equation*}
This verifies \eqref{eq equiv2'} and completes the proof of Theorem \ref{thm local LILZ}. 
    \end{proof}

\subsection{ {On the  explicit constants in Example \ref{exa LIL}} }\label{subsection RKHS} 
 {
Applying  \eqref{eq continuity} in Theorem \ref{thm local LIL}, we find that  the   constants in  Example \ref{exa LIL} are given,  
respectively, by
$$
\sup_{\xi\in \mathcal S}F_i(\xi), \ \ \ i=1, \cdots, 4.
$$  
The derivation of their exact values relies on a more explicit formula for the inner product \eqref{eq RKHS inn}.  To this end, we first 
recall from  \cite[Chapter 6.1]{BHOZ} and \cite{DU99}   some properties of the Cameron-Martin space for a fractional Brownian motion 
$\left\{B^H(t)\right\}_{t\in [0,1]}$ of index $H\in (0,1)$. }

 {
Recall  the  covariance function  of $B^H$ is }
 \begin{align}\label{eq cov 1}  {
R_H(t, s)=\mathbb E\left[B^H(t) B^H(s) \right]=\frac12\left(|t|^{2H}+|s|^{2H}-|t-s|^{2H}\right), \ \ \ s, t>0. }
 \end{align} {
 Let $K_H(t, s)$ be a deterministic kernel such that }
 \begin{align}\label{eq cov 2}  {
 R_H(t, s)=\int_0^{t\wedge s}K_H(t, u)K_H(s, u)du. }
 \end{align}  {
 When $H=\frac12$, }
 \begin{align}\label{eq KH1} {
 K_H(t, u)=\mathbf 1_{[0,t]}(u).  }
 \end{align} {
    When $H\neq \frac12$, for the explicit expressions for $K_H(t, s)$, we refer to    \cite[Chapter 2.1]{BHOZ} and  \cite[Section 3]{DU99}.}
    
 {
  By Theorem 3.3 of \cite{DU99},   the Cameron-Martin space $\mathcal H$    can be represented  as }
\begin{equation}\label{eq CM}
 { \mathcal H=\left\{\xi: [0,1]\rightarrow \mathbb R;\, \xi(t)=\int_0^t K_H(t, u) \dot \xi(u)du  \ \text{for  } \dot \xi\in L^2([0,1])\right\}. }
\end{equation}
 {
The inner product on $\mathcal H$ is given by }
\begin{equation}\label{eq inn 2}  {
\langle \xi_1, \xi_2\rangle_{\mathcal H}=\int_0^1 \dot \xi_1(u) \dot \xi_2(u) du.}
\end{equation}
 {
Let $\mathscr S$ be the unit ball in $\mathcal H$, that is 
$$
\mathscr S=\left\{\xi\in \mathcal H;\, \|\xi\|_{\mathcal H}\le 1 \right\}.
$$}

 {
The following result is needed to determine the constants in Example \ref{exa LIL}.  }
\begin{lemma}\label{lem const}
\begin{itemize}
\item[(a)]  { It holds that }
\begin{align*}  {
\sup_{\xi \in \mathscr S}\xi(1) =\sup_{\xi \in \mathscr S}\sup_{0\le t\le 1}|\xi(t)|=1.  }
\end{align*}
\item[(b)]   {For $0<\delta<1$, we have }
\begin{align*}  {
\sup_{\xi \in \mathscr S}\sup_{0\le t\le 1-\delta}|\xi(t+\delta)-\xi(t)|   = \sup_{\xi \in \mathscr S}\sup_{0\le t\le 1-\delta}\sup_{0\le s\le \delta}|\xi(t+s)-\xi(t)| \in \left[\delta^H, \sqrt{2}\delta^H\right]. }
\end{align*}
 \end{itemize}
\end{lemma}
 \begin{proof}
\noindent  {  (a)  For any $\xi \in \mathscr S$, by the Cauchy-Schwarz inequality and \eqref{eq cov 2}, we have 
\begin{align*}
\xi(1)=\int_0^1 K_H(1, u) \dot \xi(u)du\le &\,   \sqrt{ \int_0^1 K_H(1, u)^2  du \cdot \int_0^1  |\dot \xi(u)|^2du }\\
 \le & \,  \sqrt{ \int_0^1 K_H(1, u)^2  du} = 1. 
\end{align*}
Moreover, the above equalities holds if we choose $\dot \xi(u)=K_H(1, u)$ on   $[0,1]$.  Thus,  $$\sup_{\xi \in \mathscr S}\xi(1)=1.$$
}

 {
Similarly,   for any $\xi \in \mathscr S$, by the Cauchy-Schwarz inequality and \eqref{eq cov 2}, we have  that for any $t\in [0, 1]$,
\begin{align*}
|\xi(t)|=\left|\int_0^t K_H(t, u) \dot \xi(u)du\right|
\le &\,   \sqrt{ \int_0^t K_H(t, u)^2  du \cdot \int_0^t  |\dot \xi(u)|^2du }\\
\le &\,  \sqrt{ \int_0^t K_H(t, u)^2  du}\le 1. 
\end{align*}
If we choose $\dot \xi(u)=K_H(1, u)$ on $[0,1]$, then we have  $$\sup_{\xi \in \mathscr S}\sup_{0\le t\le 1}|\xi(t)|=1.$$
}
\noindent
  (b)  {Given $0<\delta<1$ and $\xi\in\mathscr S$,  by the Cauchy-Schwarz inequality and \eqref{eq cov 2}, we have that for any $t\in [0, 1-\delta]$, 
 \begin{align*}
 |\xi(t+\delta)-\xi(t)|= &\, \left|\int_0^{t+\delta} K_H(t+\delta, u) \dot \xi(u)du-\int_0^{t } K_H(t, u) \dot \xi(u)du \right|\\
 = &\, \left|\int_0^{t}\left( K_H(t+\delta, u)-K_H(t, u)\right)  \dot \xi(u)du \right| +\left|\int_t^{t+\delta} K_H(t+\delta, u) \dot \xi(u)du  \right|\\
 \le &\,  \left(\int_0^{t}\left( K_H(t+\delta, u)-K_H(t, u)\right)^2 du\right)^{\frac12}+  \left(\int_{t}^{t+\delta} K_H(t+\delta, u) ^2 du\right)^{\frac12}.
\end{align*}
}
 { Moreover,  the equality in the last step is achieved when
 \begin{equation*} 
  \dot\xi(u)= \left\{\begin{array}{ll}
K_H(t+\delta, u)-K_H(t, u),  \ \quad &\hbox{ when }\  u\in [0, t],\\
 K_H(t+\delta, u),  \ \quad &\hbox{ when } \ u\in (t, t+\delta].
\end{array}
\right.
 \end{equation*}
 }
 {By \eqref{eq cov 1} and \eqref{eq cov 2}, we know 
 $$
 \int_0^{t}\left( K_H(t+\delta, u)-K_H(t, u)\right)^2 du +  \int_{t}^{t+\delta} K_H(t+\delta, u) ^2 du=\delta^{2H}.
 $$
 Consequently, using the elementary inequality
  $$ \sqrt{a+b}\le \sqrt{a}+\sqrt{b}\le \sqrt{2(a+b)}, \ \  a, b\ge0, $$
 we have
 $$
\left(\int_0^{t}\left( K_H(t+\delta, u)-K_H(t, u)\right)^2 du\right)^{\frac12}+  \left(\int_{t}^{t+\delta} K_H(t+\delta, u) ^2 du\right)^{\frac12} \in \left[\delta^H, \sqrt{2}\delta^H\right].
 $$
 A parallel argument yields
  $$
   \sup_{\xi \in \mathscr S}\sup_{0\le t\le 1-\delta}\sup_{0\le s\le \delta}|\xi(t+s)-\xi(t)| \in  \left[\delta^H, \sqrt{2}\delta^H\right].
  $$
  In particular,   when $H=\frac12$,   substituting \eqref{eq KH1} into the above estimates leads to the following  exact value:
  \begin{align*}
\sup_{\xi \in \mathscr S}\sup_{0\le t\le 1-\delta}|\xi(t+\delta)-\xi(t)|   = \sup_{\xi \in \mathscr S}\sup_{0\le t\le 1-\delta}\sup_{0\le s\le \delta}|\xi(t+s)-\xi(t)| = \delta^{\frac12}.
\end{align*}
 }
  {The proof is complete. }
 \end{proof}
 
\vskip0.5cm

\noindent{\bf Acknowledgments}  \ \   {The authors sincerely thank the reviewers for their insightful comments and careful review, 
which have helped to improve the paper significantly.} The research of R. Wang is partially supported by the NSF of Hubei Province (2024AFB683). 
The research of Y. Xiao is partially supported by the NSF grant DMS-2153846.

\end{document}